\date{July 12, 2013}
\title[Zero mean curvature surfaces]{%
Zero mean curvature surfaces 
       in Lorentz-Minkowski $3$-space
       which change type across a 
       light-like line}
\author[Fujimori et.\ al.]{
        S.~Fujimori,  
	Y.~W.~Kim,    
	S.-E.~Koh,    
        W.~Rossman,   
	H.~Shin,      
        M.~Umehara,   
        K.~Yamada and 
	S.-D.~Yang}   
\address[Fujimori]{%
   Department of Mathematics,
   Faculty of Science, Okayama University,
   Okayama 700-8530, Japan}
\email{fujimori@math.okayama-u.ac.jp}
\address[Kim]{%
   Department of Mathematics,
   Korea University, Seoul 136-701, Korea
}
\email{ywkim@korea.ac.kr}
\address[Koh]{%
   Department of Mathematics,
   Konkuk University, Seoul 143-701, Korea}
\email{sekoh@konkuk.ac.kr}
\address[Rossman]{%
   Department of Mathematics,
   Faculty of Science,
   Kobe University,
   Kobe 657-8501, Japan
}
\email{wayne@math.kobe-u.ac.jp}
\address[Shin]{%
   Department of Mathematics,
   Chung-Ang University, Seoul 156-756, Korea
}
\email{hshin@cau.ac.kr}
\address[Umehara]{%
   Department of Mathematical and Computing Sciences,
   Tokyo Institute of Technology,
   Tokyo 152-8552, Japan
}
\email{umehara@is.titech.ac.jp}
\address[Yamada]{%
   Department of Mathematics,
   Tokyo Institute of Technology,
   Tokyo 152-8551, Japan
}
\email{kotaro@math.titech.ac.jp}
\address[Yang]{%
   Department of Mathematics,
   Korea University, Seoul 136-701, Korea
}
\email{sdyang@korea.ac.kr}
\keywords{%
    maximal surface, 
    minimal surface, 
    type change, 
    zero mean curvature}%
\theoremstyle{plain}
 \newtheorem{theorem}{Theorem}[section]
 \newtheorem{proposition}[theorem]{Proposition}
 \newtheorem{lemma}[theorem]{Lemma}
 \newtheorem{corollary}[theorem]{Corollary}
\theoremstyle{definition}
\theoremstyle{remark}
 \newtheorem*{remark*}{Remark}
\numberwithin{equation}{section}
\newcommand{\R}{\boldsymbol{R}}
\renewcommand{\phi}{\varphi}
\begin{document}
\begin{abstract}
 It is well-known that space-like maximal surfaces 
 and time-like minimal surfaces in Lorentz-Minkowski $3$-space 
 $\R^3_1$  have singularities in  general. 
 They are both characterized as zero mean curvature surfaces.
 We are interested in the case where the singular set
 consists of a light-like line,
 since this case has not been analyzed before.
 As a continuation of a previous work by the authors,
 we give the first example 
 of a family of such surfaces which change type across the light-like line.
 As a corollary, we also obtain a family of zero mean curvature
 hypersurfaces in $\R^{n+1}_1$ that change type across an 
 $(n-1)$-dimensional light-like plane.
\end{abstract}
\thanks{
  Kim was supported by NRF 2009-0086794, 
  Koh by NRF 2009-0086794 and NRF 2011-0001565, 
  and Yang by NRF 2012R1A1A2042530.
  Fujimori was partially supported by the Grant-in-Aid for 
  Young Scientists (B) No.\ 21740052,
  Rossman was supported by 
  Grant-in-Aid for Scientific Research (B) No.\ 20340012, 
  Umehara by (A) No.\ 22244006 and 
  Yamada by (B) No.\ 21340016
  from Japan Society for the Promotion of Science.
}
\maketitle

\section*{Introduction}
Many examples of space-like maximal surfaces 
containing singular curves in
the Lorentz-Minkowski $3$-space
$(\R^3_1;t,x,y)$ of signature $(-++)$  have been constructed 
in \cite{K}, \cite{ER}, \cite{UY}, \cite{KY1} and \cite{FRUYY}.

In this paper, we are interested in 
the zero mean curvature surfaces
in $\R^3_1$ changing their causal type:
Klyachin \cite{Kl} showed under a 
sufficiently weak regularity assumption that
a zero mean curvature surface
in $\R^3_1$ changes its causal type
only on the following two subsets:
\begin{itemize} 
\item null curves
      (i.e., regular curves whose velocity vector fields are light-like)
      which are non-degenerate (i.e., their projections
into the $xy$-plane are locally convex plane curves), or
 \item light-like lines, which are degenerate everywhere.
\end{itemize} 

Given a non-degenerate 
null curve $\gamma$ in $\R^3_1$, 
there exists a zero mean curvature surface which 
changes its causal type across this curve from a space-like 
maximal surface to a time-like minimal surface 
(cf.\ \cite{G}, \cite{Kl}, \cite{KY2} and \cite{KKSY}).
This construction can be accomplished using 
the Bj\"orling formula for the Weierstrass-type
representation formula of maximal surfaces.

However, if $\gamma$ is a light-like line,
the aforementioned construction fails, 
since the isothermal coordinates break down
at the light-like singular points.
Locally, such a surface is the
graph of a function $t=f(x,y)$ satisfying 
\begin{equation}\tag{$*$}\label{zm}
    (1-f_y^2)f_{xx} + 2 f_x f_yf_{xy}+(1- f_x^2)f_{yy}=0,
\end{equation}
where $f_x=\partial f/\partial x$, $f_{xy}=\partial^2 f/(\partial
x\partial y)$, etc.
We call this and its graph the {\it zero mean curvature equation\/} 
and a {\it zero mean curvature surface}, respectively.
Until now, zero mean curvature
surfaces which actually change type across a light-like line
were unknown.
As announced in \cite{CR},
the main purpose of this paper is to construct such an example.
In Section~\ref{sec:existence}, we give a formal
power series solution of the zero mean
curvature equation describing
all zero mean curvature surfaces
which contain a light-like line.
Using this, we give the precise statement 
of our main result and show how
the statement can be reduced to
a proposition (cf.\ Proposition~\ref{prop:goal}). 
In Section~\ref{sec:proof}, we then prove it.
As a consequence, we obtain the first 
example of (a family of) zero mean curvature surfaces
which change type across a light-like line.

\section{%
 The Main Theorem
}\label{sec:existence}
We discuss solutions of the zero mean curvature equation 
\eqref{zm} which have the following form
\begin{equation}
 \label{eq:series}
    f(x,y)=b_0(y)+\sum_{k=1}^\infty \frac{b_k(y)}{k}x^k, 
\end{equation}
where $b_k(y)$ ($k=1,2,\dots$) are $C^\infty$-functions. 
When $f$ contains a singular light-like line,
we may assume without loss of generality that (cf.\ \cite{CR})
\begin{equation}\label{eq:binit-1}
   b_0(y)=y,\quad b_1(y)=0.
\end{equation}
As was pointed out in \cite{CR},
there exists a real constant $\mu$ called
the {\it characteristic} of $f$ such that
$b_2(y)$ satisfies the following 
equation
\begin{equation}\label{eq:b20}
   b'_2(y)+b_2(y)^2+\mu=0\qquad
   \left(~'=\frac{d}{dy}\right).
\end{equation}

Now we derive the differential equations satisfied by $b_k(y)$ for 
$k \ge 3$ assuming \eqref{eq:binit-1}.
If we set
\[
   Y:=f_y-1=\sum_{k=2}^\infty \frac{b'_k(y)}{k}x^k
\]
and
\[
  \widetilde P:=2(Yf_{xx}-f_xf_{xy}), \quad
  Q:=Y^2f_{xx}-2f_xf_{xy}Y,\quad
  R:=f_x^2 f_{yy},
\]
then, by straightforward calculations, we see that   
\begin{align*}
 &\widetilde P=-b_2 b'_2 x^2-\frac43 b_2 b'_3 x^3
   -\sum_{k=4}^\infty \left(P_k+
   \frac{2(k-1)}k b_2 b'_k+(3-k)b'_2 b_k\right)x^k,\\
 &Q=-\sum_{k=4}^\infty Q_kx^k,
 \quad
  R=\sum_{k=4}^\infty R_kx^k,
\end{align*}
where
\begin{equation}\label{eq:PQR}
\begin{aligned}
 P_k&:=\sum_{m=3}^{k-1}\frac{2(k-2m+3)}{k-m+2} b_m b'_{k-m+2}, \\
 Q_k&:=
 \sum_{m=2}^{k-2}\sum_{n=2}^{k-m}
 \frac{3n-k+m-1}{mn}b'_m b'_{n}b_{k-m-n+2}, \\
 R_k&:=
 \sum_{m=2}^{k-2}\sum_{n=2}^{k-m}
 \frac{b_m b_{n}b''_{k-m-n+2}}{k-m-n+2} 
\end{aligned}
\end{equation}
for $k\ge 4$,
and that the zero mean curvature equation \eqref{zm} reduces to 
\[
   \sum_{k=2}^\infty \frac{b''_k}k x^k=f_{yy}=\widetilde P+Q+R.
\]
It is now immediate, by comparing the coefficients of $x^k$ from both sides, to see that each $b_k$ ($k\ge 3$)
satisfies the following 
ordinary differential equation
\begin{equation}\label{eq:b2}
   b''_k(y)+2(k-1)b_2(y) b'_k(y)+k(3-k)b'_2(y)b_k(y)
                =-k(P_k+Q_k-R_k),
\end{equation}
where $P_3=Q_3=R_3=0$ 
and
$P_k$, $Q_k$ and $R_k$ are as in \eqref{eq:PQR} for $k\geq 4$.
Note that $P_k$, $Q_k$ and $R_k$
are written in terms of $b_{j}$ ($j=1,\dots,k-1$)
and their derivatives.

\medskip

Now, we consider the case that
$1-f^2_x-f_y^2$ changes sign across the light-like line 
$\{t=y,x=0\}$.
This case occurs only when the characteristic $\mu$ 
as in \eqref{eq:b20} of $f$ vanishes \cite{CR}.
If we set 
\[
   b_2(y)=0\qquad (y\in \R),
\]
then \eqref{eq:b20} holds for $\mu=0$.
So we assume 
\begin{equation}\label{eq:initial-b}
   b_0(y)=y,\quad
   b_1(y)=0,\quad
   b_2(y)=0,\quad
   b_3(y)=3c y,
\end{equation}
where $c$ is a non-zero constant.
Then $f(x,y)$ in \eqref{eq:series}
can be rewritten as 
\begin{equation}\label{eq:series-red}
   f(x,y)=y+c yx^3+\sum_{k=4}^\infty \frac{b_k(y)}{k}x^k.
\end{equation}
In this situation, we will find a solution satisfying
\begin{equation}\label{eq:initial-k}
   b_k(0)=b'_k(0)=0\qquad (k\ge 4).
\end{equation}
Then \eqref{eq:b2} reduces to
\begin{alignat}{2}
  b''_k(y)
    &= -k (P_k+Q_k-R_k),\quad b_k(0)=b'_k(0)=0,\quad&&(k=4,5,\dots),
      \label{eq:diffeq}\\
  P_k & = \sum_{m=3}^{k-1} \frac{2(k-2m+3)}{k-m+2}b_m(y)b'_{k-m+2}(y)
  \quad &&(k \ge 4), \label{eq:pk}\\
  Q_k & = 
     \sum_{m=3}^{k-4}\sum_{n=3}^{k-m-1}
         \frac{3n-k+m-1}{mn}b'_{m}(y)b'_{n}(y)b_{k-m-n+2}(y)
  ~&&(k \ge 7), \label{eq:qk}\\
  R_k & =
     \sum_{m=3}^{k-4}\sum_{n=3}^{k-m-1}
         \frac{b_m(y)b_n(y)b''_{k-m-n+2}(y)}{k-m-n+2}
  \quad &&(k \ge 7), \label{eq:rk}
\end{alignat}
and $Q_k=R_k=0$ for 
$ 4 \le k\le 6$, 
where the fact that $b_2(y)=0$ has been extensively used.
For example,
\begin{align*}
&b_0=y,\quad b_1=b_2=0,\quad b_3=3cy,\quad b_4=-4c^2y^3,\quad
b_5=9 c^3 y^5,\\
&b_6=-24c^2y^7,\quad
b_7={70 c^5 y^9}-14 c^3 y^3,\,\, \dots\quad.
\end{align*}
\begin{figure}[t]%
 \begin{center}
       \includegraphics[width=7.8cm]{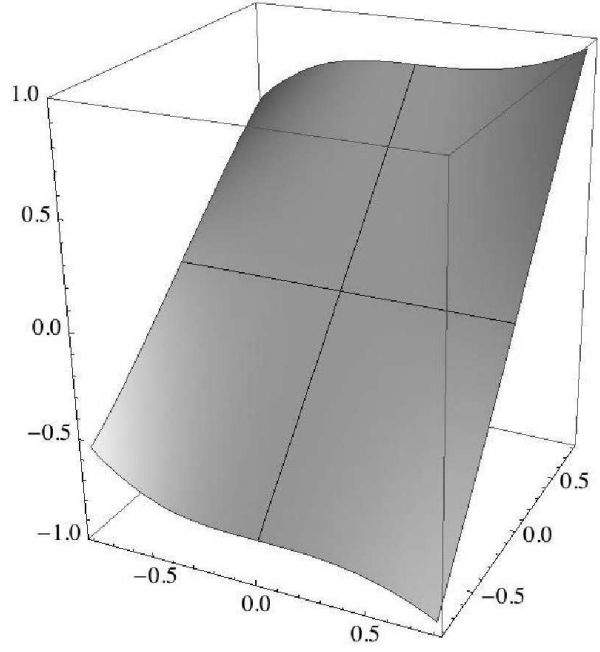}  
 \end{center}
\caption{%
 The graph of $f(x,y)$ for $c=1/2$ and $|x|,|y|<0.8$
 (The range of the graph is wider than the range used in 
 our mathematical estimation. However, this figure 
 still has a sufficiently small numerical error term 
 in the Taylor expansion.)
}%
\label{fig2}
\end{figure}

In this article, we show the following assertion:
\begin{theorem}\label{thm:main}
 For each positive number $c$,
 the formal power series solution $f(x,y)$
 uniquely determined by \eqref{eq:diffeq}, \eqref{eq:pk},
 \eqref{eq:qk} and \eqref{eq:rk}
 gives a real analytic zero mean curvature
 surface on a neighborhood of $(x,y)=(0,0)$.
 In particular, there exists a non-trivial
 $1$-parameter family of real analytic zero mean curvature
 surfaces each of which changes type 
 across a light-like line {\rm (see Figure\ \ref{fig2})}.
\end{theorem}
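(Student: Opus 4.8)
The plan is to establish convergence of the formal power series \eqref{eq:series-red} on a neighborhood of the origin by the method of majorants, applied to the recursively defined coefficient functions $b_k(y)$. Since each $b_k$ is the unique solution of the second-order linear initial value problem \eqref{eq:diffeq} with vanishing initial data at $y=0$ and with inhomogeneous term $-k(P_k+Q_k-R_k)$ depending only on $b_3,\dots,b_{k-1}$ and their derivatives, the functions $b_k(y)$ are well-defined real analytic functions on a common interval around $y=0$ (independent of $k$); the real issue is to control their growth in $k$ so that $f(x,y)$ converges for $|x|$ small.

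First I would integrate \eqref{eq:diffeq} twice to write $b_k(y)=-k\int_0^y\int_0^s\bigl(P_k+Q_k-R_k\bigr)(\tau)\,d\tau\,ds$, and likewise express $b_k'(y)$ as a single integral. The aim is to prove by strong induction on $k$ an estimate of the shape $|b_k(y)|\le A\,\Lambda^k\,|y|^{\,\alpha k-\beta}$ and $|b_k'(y)|\le A'\,\Lambda^k\,|y|^{\,\alpha k-\beta-1}$ on a fixed interval $|y|\le\delta$, for suitable constants $A,A',\Lambda,\alpha,\beta$, or — what is cleaner and sufficient — to build an explicit scalar majorant series. Concretely, I would introduce a comparison function $\Phi(x,y)=\sum_k \phi_k(y)x^k/k$ whose coefficients satisfy the same recursion with all signs made favorable and all bounded coefficients replaced by their suprema, reduce the resulting majorant recursion to a single algebraic/analytic functional equation for $\Phi$ (using that $P_k$ is bilinear and $Q_k,R_k$ are trilinear in the $b_m$'s and their derivatives, so the generating functions multiply), and solve that functional equation by the analytic implicit function theorem to get a function analytic near $(0,0)$ that dominates $f$ term by term. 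One must track carefully the extra factor $k$ in \eqref{eq:diffeq} and the denominators $k-m+2$, $mn$, $k-m-n+2$ appearing in \eqref{eq:pk}--\eqref{eq:rk}: the double integration in $y$ supplies two powers of $|y|$ per index step, and the convolution structure must be balanced against the linear factor $k$ so that the majorant equation has a genuine analytic solution rather than one with zero radius of convergence. This balancing of the gain from $\int_0^y\int_0^s$ against the loss from the factor $k$ is the step I expect to be the main obstacle, and it is presumably why the initial normalization $b_2\equiv 0$ and the seed $b_3=3cy$ (forcing $b_k$ to vanish to high order at $y=0$) are essential.

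Once convergence of \eqref{eq:series-red} on some polydisk $|x|<\varepsilon$, $|y|<\delta$ is established, $f$ is real analytic there and, by construction, its coefficients satisfy \eqref{eq:b2} with $b_2\equiv 0$, hence $f$ solves the zero mean curvature equation \eqref{zm} identically; real analyticity lets us pass from the formal identity of coefficients to the genuine PDE. Finally, to see that the surface genuinely changes causal type across the light-like line $\{x=0,\ t=y\}$, I would compute the sign of $1-f_x^2-f_y^2$ near that line: from \eqref{eq:series-red} one has $f_x=3cyx^2+O(x^3)$ and $f_y=1+3cx^3+O(x^4)$, so $1-f_x^2-f_y^2=-6cx^3+O(x^4)$, which changes sign with $x$ for every $c>0$ (and, since $c$ is a free positive parameter, the surfaces are pairwise non-congruent, giving the asserted non-trivial $1$-parameter family). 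This last causal-type computation is routine given the explicit low-order coefficients already recorded in the excerpt; the entire weight of the proof rests on the majorant estimate.
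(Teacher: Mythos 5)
Your reduction of the theorem to a growth estimate on the $b_k$ is the same reduction the paper makes, and your closing computation of the causal type is essentially right (minor slip: $f_y=1+cx^3+\cdots$, not $1+3cx^3+\cdots$, so $1-f_x^2-f_y^2=-2cx^3+\cdots$ along $y=0$; the sign change persists). But the entire substance of the paper's proof is exactly the step you defer: Proposition~\ref{prop:goal}, proved in Section~\ref{sec:proof}, establishes by induction the bounds \eqref{eq:b-est-1}--\eqref{eq:b-est-3}, namely $|b_l(y)|\le \frac{3c|y|^{l^*+2}}{(l^*+2)^2}M^{l-3}$ with $l^*=\frac12(l-1)-2$, on $|y|\le\delta$, with $M$ chosen as in \eqref{eq:M}. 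The mechanism that ``balances the loss from the factor $k$'' --- which you correctly identify as the main obstacle but do not resolve --- is quantitative and specific: the exponent of $|y|$ grows only like $l/2$ (half a power per index), so each convolution in \eqref{eq:pk}--\eqref{eq:rk} produces a surplus factor $|y|^{3/2}$ or a surplus $M^{-4}$, which is absorbed by the choice of $M$; and the explicit factor $k$ together with the denominators is tamed by the summation--integral comparisons of Lemmas~\ref{lem:sum-int} and~\ref{lem:sum-2}, which bound $\sum_q k^3/\bigl(q^2(k-q)^2\bigr)$ by $\int_{1/k}^{1-1/k}du/\bigl(u^2(1-u)^2\bigr)\le \tau k$ and the corresponding double sum by $6\tau$. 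Without producing these exponents, the constant $M$, and the comparison lemmas (or equivalents), the induction does not close, so the proposal as written has a genuine gap at its central point.

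Moreover, one of your two suggested routes would fail outright. The generating-function/majorant scheme --- replace the recursion by a functional equation for a dominating series $\Phi$ and solve it by the analytic implicit function theorem --- is the classical Cauchy--Kovalevskaya argument, and it is not available here: along the initial line $\{x=0\}$ one has $f_y\equiv 1$, so the coefficient $1-f_y^2$ of $f_{xx}$ in \eqref{zm} vanishes identically, i.e.\ the line is characteristic (indeed light-like and degenerate, which is the whole point of the paper). This is why the coefficients $b_k$ are not determined algebraically but only as solutions of the second-order ODEs \eqref{eq:diffeq} (note $R_k$ involves $b''$ of lower-order coefficients, reflecting that $R=f_x^2f_{yy}$ has the same order as the left-hand side), and why a bespoke induction with the half-power weights $|y|^{l^*}$ is used instead of a standard majorant function. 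Your other route --- direct induction on bounds of the shape $A\Lambda^k|y|^{\alpha k-\beta}$ --- is the paper's route, but its viability hinges precisely on taking $\alpha=\tfrac12$ and on the integral estimates above, none of which appear in the proposal.
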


As a consequence, we get the following:

\begin{corollary}
 There exists a family of zero-mean curvature hypersurfaces in
 Lorentz-Minkowski space $\R^{n+1}_1$
 which change type across an
 $(n-1)$-dimensional 
 light-like plane.
\end{corollary}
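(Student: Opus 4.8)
The plan is to reduce the statement to an application of the Cauchy--Kovalevskaya theorem. The delicate point is that the recursion \eqref{eq:diffeq}--\eqref{eq:rk} presents $f$ through its Cauchy data on the light-like line $\{x=0\}$, and that line is \emph{characteristic} for \eqref{zm}: along it $f_y=1$, so the coefficient $1-f_y^2$ of $f_{xx}$ vanishes and one cannot solve \eqref{zm} for $f_{xx}$ there --- this is precisely the breakdown of the Bj\"orling-type construction noted in the Introduction. However, the normalizations \eqref{eq:initial-b} and \eqref{eq:initial-k} make the Cauchy data of $f$ on the \emph{transversal} line $\{y=0\}$ completely explicit: since $b_0(y)=y$, $b_1=b_2=0$, $b_3(y)=3cy$, and $b_k(0)=b_k'(0)=0$ for all $k\ge4$, the formal series satisfies
\[
   f(x,0)=0,\qquad f_y(x,0)=1+cx^3 .
\]
On $\{y=0\}$ we have $f_x(x,0)=0$, hence $1-f_x^2=1$ there, so \eqref{zm} can be put in the normal form $f_{yy}=G(f_x,f_y,f_{xx},f_{xy})$ with
\[
   G(f_x,f_y,f_{xx},f_{xy}):=-\frac{(1-f_y^2)f_{xx}+2f_xf_yf_{xy}}{1-f_x^2},
\]
and $G$ is real analytic in a neighborhood of $(f_x,f_y,f_{xx},f_{xy})=(0,1,0,0)$, the value taken by the data at $(x,y)=(0,0)$.

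The argument then has three steps. First, I would note that $f$, regarded as an element of $\R[[x,y]]$, is a formal solution of \eqref{zm}: this is the computation carried out above in deriving \eqref{eq:b2}, read in reverse (equation \eqref{eq:b2} with $b_2\equiv0$ is equivalent to $f_{yy}=\widetilde P+Q+R$, which is \eqref{zm} rearranged), and one reads off the data on $\{y=0\}$ displayed above. It is convenient here, though not strictly necessary, to record that each $b_k(y)$ is an odd polynomial in $y$ of degree $\le2k-5$, proved by induction by tracking degrees through \eqref{eq:pk}--\eqref{eq:rk}; this makes the membership $f\in\R[[x,y]]$ transparent. Second, I would apply Cauchy--Kovalevskaya to the normal form $f_{yy}=G$ with the real analytic data $f(x,0)=0$, $f_y(x,0)=1+cx^3$ on a small interval about $x=0$, obtaining a real analytic function $\widehat f$ on a neighborhood $U$ of $(0,0)$ that solves \eqref{zm} in the classical sense. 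Third, I would identify $\widehat f$ with $f$: the Taylor expansion of $\widehat f$ at the origin is a formal power series solution of \eqref{zm} with the same Cauchy data on $\{y=0\}$ as $f$; since any formal solution of \eqref{zm} with $f(x,0)=0$ has $f_x(0,0)=0$, for every such solution $1-f_x^2$ is a unit of $\R[[x,y]]$, equation \eqref{zm} is equivalent to $f_{yy}=G$, and the formal solution of the latter is uniquely determined by the pair $\bigl(f(x,0),f_y(x,0)\bigr)$ through the classical coefficient recursion. Hence $f$ is the convergent Taylor series of $\widehat f$, so $f$ gives a real analytic zero mean curvature surface on $U$.

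It remains to verify the type change. Since $f(0,y)=b_0(y)=y$, the line $L=\{x=0,\ t=y\}$ lies on the surface and is light-like. Using $f_y=1+cx^3+O(x^4)$ and $f_x=3c\,y\,x^2+O(x^3)$ one finds
\[
   1-f_x^2-f_y^2=-2cx^3+O(x^4)
\]
with the remainder analytic in $(x,y)$ near the origin; as $c>0$, the left-hand side is positive for $x<0$ and negative for $x>0$ on a punctured neighborhood of $(0,0)$, so the surface is space-like on one side of $L$ and time-like on the other. Letting $c$ range over $(0,\infty)$ --- the surfaces for distinct $c$ being non-congruent, already distinguished by $f_y(x,0)=1+cx^3$ --- yields the non-trivial $1$-parameter family, which proves Theorem~\ref{thm:main}. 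For the Corollary, put $u(x_1,\dots,x_n):=f(x_1,x_2)$: the zero mean curvature graph equation $(1-|\nabla u|^2)\Delta u+\sum_{i,j}u_{x_i}u_{x_j}u_{x_ix_j}=0$ in $\R^{n+1}_1$ reduces to \eqref{zm} for such $u$, and $1-|\nabla u|^2=1-f_{x_1}^2-f_{x_2}^2$ changes sign across the light-like $(n-1)$-plane $\{x_1=0,\ t=x_2\}$, which lies on the hypersurface.

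The main obstacle is conceptual rather than computational: seeing that the line $\{x=0\}$ carrying the natural data is characteristic, and then exploiting \eqref{eq:initial-b}--\eqref{eq:initial-k} to move the data to the non-characteristic line $\{y=0\}$, on which it is merely the polynomial $1+cx^3$ so that Cauchy--Kovalevskaya applies immediately. A more direct approach --- estimating $b_k(y)$ by the method of majorants via the integral representation $b_k(y)=-k\int_0^y(y-s)\bigl(P_k+Q_k-R_k\bigr)(s)\,ds$ --- is also possible, but the sums \eqref{eq:pk}--\eqref{eq:rk} together with the prefactor $-k$ generate powers of $k$ that a crude estimate cannot absorb, so one must feed in the degree bound $\deg b_k\le2k-5$ (equivalently, work with a norm weighted by the degree in $y$) to close the induction; that is where the bulk of the estimating work would lie.
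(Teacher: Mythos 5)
Your argument is correct, but it reaches the corollary by a genuinely different route from the paper's. The paper's own proof of the corollary consists only of the dimensional reduction you give at the end --- set $u(x_1,\dots,x_n)=f(x_1,x_2)$ and observe that the hypersurface zero mean curvature equation collapses to \eqref{zm}, with the sign of $1-|\nabla u|^2=1-f_{x_1}^2-f_{x_2}^2$ controlled by the two-variable computation --- the existence of $f$ being quoted from Theorem~\ref{thm:main}. That theorem is proved in the paper not by Cauchy--Kovalevskaya but by the direct convergence estimate of Proposition~\ref{prop:goal}: an induction bounding $|b_k|,|b_k'|,|b_k''|$ by explicit multiples of $|y|^{k^*}M^{k-3}$ on all of $[-\delta,\delta]$, where the weights $|y|^{k^*}$ together with the Riemann-sum comparisons of Lemmas~\ref{lem:sum-int} and \ref{lem:sum-2} absorb exactly the powers of $k$ that you predict a crude majorant scheme cannot handle (no degree bound on $b_k$ is needed there). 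Your alternative --- noting that \eqref{eq:initial-b} and \eqref{eq:initial-k} force the Cauchy data on the non-characteristic line $\{y=0\}$ to be the polynomials $f(x,0)=0$, $f_y(x,0)=1+cx^3$, applying Cauchy--Kovalevskaya to the normal form $f_{yy}=G$, and identifying the analytic solution with the recursion's series via formal uniqueness (legitimate, since each $b_k$ is a polynomial, so the series lies in $\R[[x,y]]$ and is a formal solution by the derivation of \eqref{eq:b2}) --- is sound and much shorter. The trade-off is quantitative: the paper's estimates give convergence on $[-C^{-1},C^{-1}]\times[-\delta,\delta]$ for every $\delta>0$, hence a surface defined on a neighborhood of the entire light-like line and a hypersurface containing the full $(n-1)$-dimensional light-like plane, whereas your construction is purely local at the origin, so it yields only a segment of the line and a slab of the plane --- sufficient for the statements as literally phrased (``on a neighborhood of $(x,y)=(0,0)$''), but strictly weaker than what the paper obtains. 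One aside to drop or justify: differing Cauchy data $f_y(x,0)=1+cx^3$ does not by itself show that distinct $c$ give non-congruent surfaces (a congruence need not respect your choice of graph coordinates); fortunately neither the theorem nor the corollary requires non-congruence.
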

\begin{proof}
 Let $f$ be as in the theorem.
 The graph of the function defined by
 \[
     \R^{n}\ni (x_1,\dots,x_n)\longmapsto f(x_1,x_2)\in \R
 \]
 gives the desired hypersurface.
 In this case, the zero mean curvature equation
 \[
    \left( 1-\sum_{j=1}^nf^2_{x_j}\right)\sum_{i=1}^n f_{x_i,x_i}+
     \sum_{i,j=1}^n f_{x_ix_j}f_{x_i}f_{x_j}=0
     \quad
    \left(f_{x_i}:=\frac{\partial}{\partial x_i},~
        f_{x_i,x_j}:=\frac{\partial^2f}{\partial x_i\partial x_j}\right)
 \]
reduces to \eqref{zm} in the introduction.
\end{proof}

To prove Theorem~\ref{thm:main}, 
it is sufficient to show that for arbitrary positive constants 
$c>0$ and $\delta >0$ there exist positive constants 
$n_0$, $\theta_0$, 
and $C$ 
such that
\begin{equation}\label{eq:W}
|b_k(y)|\le \theta_0 C^{k}\qquad (|y| 
\le \delta)
\end{equation}
holds for $k\ge n_0$. 
In fact, if \eqref{eq:W} holds,
then the series \eqref{eq:series-red} converges uniformly
over the rectangle $[-C^{-1},C^{-1}]\times [-\delta,\delta]$.

The key assertion to prove \eqref{eq:W} is the following
\begin{proposition}\label{prop:goal}
 For each $c>0$ and $\delta>0$, we set
 \begin{equation}\label{eq:M}
   M:= 3\max\left\{ 144\,c\,\tau |\delta|^{3/2},\,
              \sqrt[4]{192 c^2 \tau}
       \right\},
 \end{equation}
 where $\tau$ is the positive constant 
 given by \eqref{eq:tau2} in the appendix,
 such that
 \begin{equation}\label{eq:tau}
     t \int_{t}^{1-t}\frac{du}{u^2(1-u)^2}
 \leq \tau\qquad \left(0<t<\frac{1}{2}\right).
 \end{equation}

 Then the function $\{b_l(y)\}_{l\ge 3}$ 
 formally determined by
 the recursive formulas
 \eqref{eq:diffeq}--\eqref{eq:rk}
 satisfies the inequalities
 \begin{align}
   |b''_l(y)|&\leq c|y|^{l^*}M^{l-3},\label{eq:b-est-1}\\
   |b'_l(y)|&\leq \frac{3c|y|^{l^*+1}}{l^*+2}M^{l-3},\label{eq:b-est-2}\\
   |b_l(y)|&\leq \frac{3c|y|^{l^*+2}}{(l^*+2)^2}M^{l-3} \label{eq:b-est-3}
 \end{align}
 for any 
 \begin{equation}\label{eq:y-range}
    y\in [-\delta,\delta],
 \end{equation}
 where
 \begin{equation}\label{eq:l-star}
    l^*:=\frac{1}{2}(l-1)-2\qquad (l=3,4,\dots).
 \end{equation}
\end{proposition}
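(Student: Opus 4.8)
The plan is to prove the three estimates \eqref{eq:b-est-1}--\eqref{eq:b-est-3} simultaneously by strong induction on $l$, exploiting the triangular structure of the recursion: since $P_l$, $Q_l$, $R_l$ are polynomial expressions in $b_j$ and their derivatives for $j\le l-1$, each $b_l$ is determined by the earlier data. The base case is $l=3$: here $b_3=3cy$, so $b_3''=0$, $b_3'=3c$, $b_3=3cy$, and one checks directly that these satisfy \eqref{eq:b-est-1}--\eqref{eq:b-est-3} with $l^*=-1.5$ (the powers $|y|^{l^*}$, $|y|^{l^*+1}=|y|^{-0.5}$ are harmless because the coefficients actually vanish to the right order; more carefully, for $l=3$ the claimed bounds read $|b_3''|\le c|y|^{-3/2}$, $|b_3'|\le 6c|y|^{-1/2}$, $|b_3|\le 12c|y|^{1/2}$, the last of which can fail for large $|y|$, so I expect the genuine induction to start at $l=4$, $5$, or $6$ with the first few $b_k$ from the explicit list serving as base cases and the choice of $M$ in \eqref{eq:M} being exactly what makes those initial inequalities hold).

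The inductive step has two parts. First, \emph{estimating the inhomogeneous term} $-l(P_l+Q_l-R_l)$: substitute the inductive bounds \eqref{eq:b-est-1}--\eqref{eq:b-est-3} into the defining sums \eqref{eq:pk}--\eqref{eq:rk}. In each product appearing in $P_l$, $Q_l$, $R_l$ the $y$-powers add up to exactly $l^*$ (this is the point of the shift $l^*=\tfrac12(l-1)-2$: the weight $\tfrac12(\text{index}-1)-2$ is additive under the convolution structure $b_mb_{n}b_{k-m-n+2}$ up to the constant $-2$, and similarly $b_mb'_{k-m+2}$), and the powers of $M$ combine to $M^{l-3}$ up to a bounded combinatorial factor. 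The combinatorial sums over $m$ (and $m,n$) must then be bounded by absolute constants times the geometric factor; this is where the numerical constants $144$, $192$, and the factor $3$ in $M$ are calibrated — one wants the sum of all coefficients coming from $P_l+Q_l-R_l$, after the $M$-powers are pulled out, to be dominated by $(M/3)^{?}$ so that \eqref{eq:b-est-1} closes. Roughly, one needs $l\cdot(\text{sum of reduced coefficients})\le M$, and the $\sqrt[4]{192c^2\tau}$ term handles the quadratic-in-$c$ contributions from $Q_l,R_l$ while the $144c\tau|\delta|^{3/2}$ term handles the lower-order terms; the explicit $b_7$ having both a $c^5$ and a $c^3$ piece signals that these two regimes genuinely coexist.

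Second, \emph{integrating twice with the initial conditions} $b_l(0)=b'_l(0)=0$. From $b_l''= -l(P_l+Q_l-R_l)$ and the bound just obtained, $|b_l''(y)|\le c|y|^{l^*}M^{l-3}$, which is \eqref{eq:b-est-1}; then $b_l'(y)=\int_0^y b_l''$ gives $|b_l'(y)|\le \tfrac{c|y|^{l^*+1}}{l^*+2}M^{l-3}$ — but here $l^*+1>0$ is needed for the integral $\int_0^y|s|^{l^*}\,ds$ to converge, which holds once $l\ge 5$, and the factor $3$ discrepancy between $\tfrac{c}{l^*+2}$ and $\tfrac{3c}{l^*+2}$ gives slack that is spent elsewhere — and a second integration gives \eqref{eq:b-est-3} with denominator $(l^*+1)(l^*+2)\ge\tfrac13(l^*+2)^2$ for $l$ large, explaining the $(l^*+2)^2$ and the surviving factor $3$. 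The role of \eqref{eq:tau}/\eqref{eq:tau2} and the constant $\tau$ is to control a Beta-integral of the form $\int_0^y s^{a}(y-s)^{b}\,ds$-type or $\int t^{-2}(1-t)^{-2}$ bound that arises when one integrates the convolution sums term by term; I would isolate that estimate as a lemma (it is promised in the appendix) and invoke it as a black box.

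The main obstacle I anticipate is the bookkeeping in the second bullet: getting the combinatorial sums $\sum_m$, $\sum_{m,n}$ with their rational coefficients $\tfrac{2(l-2m+3)}{l-m+2}$, $\tfrac{3n-l+m-1}{mn}$, $\tfrac{1}{l-m-n+2}$ bounded cleanly enough that $l\times(\text{everything})\le M$, uniformly in $l$. The numerators grow linearly in $l$, so one needs the number of terms and the $\tau$-type integral gain to beat that factor of $l$; verifying that the specific $M$ in \eqref{eq:M} is large enough — rather than merely "some constant" — is the delicate quantitative heart of the argument, and I would organize it by first proving a clean per-term bound and then summing, keeping the factor-$3$ cushions in \eqref{eq:b-est-2}--\eqref{eq:b-est-3} in reserve to absorb the $l$-dependence at the very end.
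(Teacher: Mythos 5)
Your skeleton coincides with the paper's: strong induction on $l$, reduce everything to a bound on $l(P_l+Q_l-R_l)$ of the form \eqref{eq:b-est-1}, then integrate twice using $b_l(0)=b'_l(0)=0$ to get \eqref{eq:b-est-2}--\eqref{eq:b-est-3} (and indeed the exponents $l^*$ are additive through the convolutions, and the factor $3$ is exactly the cushion needed in the first integration at $l=4$, where $\tfrac{1}{l^*+1}=\tfrac{3}{l^*+2}$). But the step you yourself flag as ``the delicate quantitative heart'' is genuinely missing, and it is not mere bookkeeping: after inserting the inductive bounds one is left with sums such as $\sum_{m=3}^{k-1}\tfrac{k|k-2m+3|}{(m-1)^2(k-m+1)(k-m+2)}$ (for $kP_k$) and $\sum_{m}\sum_{n}\tfrac{k|3n-k+m-1|}{(m-1)^2(n-1)^2(k-m-n+2)^2}$ (for $kQ_k$, $kR_k$), whose numerators grow linearly in $k$, and one must explain why the total is bounded \emph{uniformly in $k$}. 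The paper's mechanism is a Riemann-sum comparison: bounding $|k-2m+3|\le k$ and $|3n-k+m-1|\le 2k$, the one-variable sum becomes $\tfrac1k\sum_{q=2}^{k-2}\tfrac{k^3}{q^2(k-q)^2}\le\tfrac1k\int_{1/k}^{1-1/k}\tfrac{du}{u^2(1-u)^2}\le\tau$ (Lemma~\ref{lem:sum-int} plus \eqref{eq:tau}, whose prefactor $t$ is precisely what absorbs the factor $k$), and the two-variable sums are handled by iterating this to get $\sum_p\sum_q\tfrac{k^2}{p^2q^2(k-p-q)^2}\le 6\tau$ (Lemma~\ref{lem:sum-2}). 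The specific $M$ of \eqref{eq:M} then enters only through the elementary inequalities $\tfrac{144c|\delta|^{3/2}}{M}\le\tfrac{1}{3\tau}$, $\tfrac{432c^2}{M^4}\le\tfrac{1}{36\tau}$, $\tfrac{144c^2}{M^4}\le\tfrac{1}{3^4\tau}$, which close \eqref{eq:concl} with the margin $\tfrac{c}{3}$ for each of the three terms. None of this appears in your proposal; you also misplace the role of $\tau$, which controls these discrete index-sums, not a Beta-type integral in the $y$-variable (the $y$-integrations are just $\int_0^{|y|}s^{l^*}ds$).

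Two smaller corrections: $3^*=\tfrac12(3-1)-2=-1$, not $-3/2$, so the base case $l=3$ holds verbatim (with equality in \eqref{eq:b-est-2} and \eqref{eq:b-est-3}); your worry that the induction must start at $l=4,5,6$ with ad hoc base cases is an artifact of that slip, and the choice of $M$ plays no role at $l=3$ since $M^0=1$. Likewise the convergence of $\int_0^y|s|^{l^*}ds$ needs only $l^*>-1$, which already holds at $l=4$ ($l^*=-1/2$); that is the delicate case the paper singles out, not $l\ge5$.
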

Once this proposition is proven, \eqref{eq:W} 
follows immediately. In fact, if we set
\[
   \theta_0=\frac 3c (\delta M)^3,\qquad 
   C:= \delta M
\]
and $n_0\ge 7$,
then $1\le l^*+2<l-3$ and \eqref{eq:W} follows from
\[
   \frac{3c|y|^{l^*+2}}{(l^*+2)^2}M^{l-3}\le 
   \theta_0C^{l}.
\]

\section{Proof of Proposition \ref{prop:goal}}\label{sec:proof}
We prove the proposition using induction on
the number $l\ge 3$.
If $l=3$, then
\begin{align*}
   |b''_3(y)| &= 0 \leq \frac{c}{|y|}= c|y|^{3^*}M^{0},\\
   |b'_3(y)|  &= 3c = \frac{3c|y|^{3^*+1}}{3^*+2}M^0,\\
   |b_3(y)|     &= 3c|y| = \frac{3c|y|^{3^*+2}}{(3^*+2)^2}M^0
\end{align*}
hold, using that $b_3(y)=3c y$, $M^0=1$ 
and $3^*=-1$.
So we prove the assertion for $l\ge 4$.
Since \eqref{eq:b-est-2}, \eqref{eq:b-est-3} 
follow from \eqref{eq:b-est-1} by integration,
it is sufficient to show that
\eqref{eq:b-est-1} holds for each $l\ge 4$.
(In fact, 
the most delicate case is $l=4$.
In this case $l^*=-1/2$ and we can use the 
fact that $\int_0^{y_0} 1/\sqrt{y} \,dy$
for $y_0>0$ converges.)

The inequality \eqref{eq:b-est-1} 
follows if one shows that, for each $k \ge 4$
\begin{equation}\label{eq:concl}
   \bigl|k P_k(y)\bigr|,\,\,
   \bigl|k Q_k(y)\bigr|,\,\,
   \bigl|k R_k(y)\bigr|\,\,\leq \frac{c}{3} |y|^{k^*}M^{k-3}
\quad (|y|\le \delta)
\end{equation}
under the assumption that 
\eqref{eq:b-est-1}, \eqref{eq:b-est-2} and \eqref{eq:b-est-3}
hold for all $3 \le l \le k-1$.
In fact, if \eqref{eq:concl} holds,
\eqref{eq:b-est-1}  for $l=k$ follows immediately.
Then by the initial condition \eqref{eq:diffeq} 
(cf.\ \eqref{eq:initial-k}),
we have \eqref{eq:b-est-2} and \eqref{eq:b-est-3}
for $l=k$ by integration.

\subsection*{The estimation of $|kP_k|$ for $k \ge 4$.}
By \eqref{eq:pk} 
and using the fact that \eqref{eq:b-est-2},
\eqref{eq:b-est-3} 
hold for 
$l\leq k-1$, we have 
for each $|y|<\delta$ that
\allowdisplaybreaks{%
\begin{alignat*}{2}
 |kP_k|&\leq \sum_{m=3}^{k-1}
         \frac{2k|k-2m+3|}{k-m+2}
           \bigl|b_m(y)\bigr|\,
           \bigl|b'_{k-m+2}(y)\bigr|\\
       &\le \sum_{m=3}^{k-1}
             \frac{2k|k-2m+3|}{k-m+2}
             \left(\frac{3c M^{m-3}|y|^{m^*+2}}{(m^*+2)^2}\right)
             \left(\frac{3c M^{k-m+2-3}|y|^{(k-m+2)^*+1}}{(k-m+2)^*+2}\right)
\\
       &= c M^{k-3}|y|^{k^*}\frac{144 c |y|^{\frac{3}{2}}}{M}
             \sum_{m=3}^{k-1}
             \frac{k|k-2m+3|}{(m-1)^2(k-m+1)(k-m+2)}\\
       &\le c M^{k-3}|y|^{k^*}\frac{144 c |\delta|^{\frac{3}{2}}}{M}
             \sum_{m=3}^{k-1}
             \frac{k|k-2m+3|}{(m-1)^2(k-m+1)(k-m+2)}\\
       &\le \frac{c}{3\tau} M^{k-3}|y|^{k^*}
             \sum_{m=3}^{k-1}
             \frac{k|k-2m+3|}{(m-1)^2(k-m+1)^2}.
\intertext{%
Here, we used \eqref{eq:M}.
Since}
&\max_{m=3,\dots,k-1}|k-2m+3|=\max_{m=3,k-1}
 |k-2m+3|=\max\{|k-3|,|-k+5|\},
\intertext{
by setting $q=m-1$, we have that
}
  |kP_k| 
       &\leq \frac{c}{3\tau} M^{k-3}|y|^{k^*}
             \sum_{m=3}^{k-1}
             \frac{k^2}{(m-1)^2(k-m+1)^2}
       = \frac{c}{3\tau} M^{k-3}|y|^{k^*}
             \frac{1}{k}
             \sum_{q=2}^{k-2}
             \frac{k^3}{q^2(k-q)^2} \\
& \leq \frac{c}{3\tau} M^{k-3}|y|^{k^*}
             \frac{1}{k}
             \int_{\frac{1}{k}}^{1-\frac{1}{k}}
                \frac{du}{u^2(1-u)^2}
\leq \frac{c}{3} M^{k-3}|y|^{k^*},
\end{alignat*}}%
where we applied Lemma \ref{lem:sum-int}
and \eqref{eq:tau} at the last step of the estimations.
Hence, we get \eqref{eq:concl} for $k P_k$.

\subsection*{The estimation of $|kQ_k|$ for $k \ge 7$.}
By \eqref{eq:qk} and the
induction assumption, we have that
\begin{align*}
 |kQ_k|&\leq \sum_{m=3}^{k-4}\sum_{n=3}^{k-m-1}
         \frac{k|3n-k+m-1|}{mn}
           \bigl|b'_m(y)\bigr|\,
           \bigl|b'_n(y)\bigr|\,
           \bigl|b_{k-m-n+2}(y)\bigr|\\
       &\leq \sum_{m=3}^{k-4}\sum_{n=3}^{k-m-1}
         \frac{k|3n-k+m-1|}{mn}
          \left(\frac{3c M^{m-3}|y|^{m^*+1}}{m^*+2}\right)\times\\
       &\hphantom{\leq \sum_{m=3}^{k-4}xxxxxx}
          \left(\frac{3c M^{n-3}|y|^{n^*+1}}{n^*+2}\right)
          \left(\frac{3c M^{k-m-n+2-3}|y|^{(k-m-n+2)^*+2}}
{\bigl((k-m-n+2)^*+2\bigr)^2}\right)\\
       &= c M^{k-3}|y|^{k^*}
          \frac{432c^2}{M^4}
          \sum_{m=3}^{k-4}\sum_{n=3}^{k-m-1}
          \frac{k|3n-k+m-1|}{(m-1)^2(n-1)^2(k-m-n+2)^2}.
\end{align*}
Now we apply the inequality  
\begin{align*}
 \max_{{3\le m\le k-4}\atop{3\le n\le k-m-1}} |3n-k+m-1| 
 &= 
 \max_{(m,n)=(3,3),(3,k-4),(k-4,3)}
 |3n-k+m-1|\\
 &=\max \{|-k+11|,\,4,\,|2k-10|\}\leq 2k,
\end{align*}
and also
\[
  \frac{432c^2}{M^4}\le \frac{1}{36\tau},
\]
which follows from \eqref{eq:M}.
Setting $p:=m-1$, $q=n-1$, we have that
\begin{align*}
  |kQ_k| 
       &\leq \frac{c}{36\tau}M^{k-3}|y|^{k^*}
          \sum_{m=3}^{k-4}\sum_{n=3}^{k-m-1}
          \frac{2k^2}{(m-1)^2(n-1)^2(k-m-n+2)^2}
        \\
       &= \frac{c}{18\tau} M^{k-3}|y|^{k^*}
          \sum_{p=2}^{k-5}\sum_{q=2}^{k-p-2}
          \frac{k^2}{p^2q^2(k-p-q)^2}.
\end{align*}
Now applying Lemma \ref{lem:sum-2},
we have that
\[
  |kQ_k| 
       \le  \frac{c}{18\tau} M^{k-3}|y|^{k^*}\times 6\tau
        \le \frac{c}{3} M^{k-3}|y|^{k^*},
\]
which proves \eqref{eq:concl} for $k Q_k$.

\subsection*{The estimation of $|kR_k|$  for $k \ge 7$.}
Like as in the case of $|kQ_k|$, we have
that
\begin{align*}
 |kR_k|&\leq \sum_{m=3}^{k-4}\sum_{n=3}^{k-m-1}
         \frac{k \bigl|b_m(y)\bigr|\,
           \bigl|b_n(y)\bigr|\,
           \bigl|b''_{k-m-n+2}(y)\bigr|}{k-m-n+2}\\
       &\leq \sum_{m=3}^{k-4}\sum_{n=3}^{k-m-1}
         \frac{k}{k-m-n+2}
          \left(\frac{3c M^{m-3}|y|^{m^*+2}}{(m^*+2)^2}\right)
\times\\
       &\hphantom{\leq \sum_{m=3}^{k-4}xxxx}
          \left(\frac{3c M^{n-3}|y|^{n^*+2}}{(n^*+2)^2}\right)
          \left(c M^{k-m-n+2-3}|y|^{(k-m-n+2)^*}\right)\\
       &= 144c^3 M^{k-7}|y|^{k^*}
          \sum_{m=3}^{k-4}\sum_{n=3}^{k-m-1}
          \frac{k}{(k-m-n+2)
            (m-1)^2(n-1)^2
           }\\
       &= c M^{k-3}|y|^{k^*}\frac{144 c^2}{M^4}
          \sum_{m=3}^{k-4}\sum_{n=3}^{k-m-1}
          \frac{k^2}{(k-m-n+2)^2
            (m-1)^2(n-1)^2}.
\end{align*}
Now we set $p=m-1$, $q=n-1$, 
and using the inequality
\[
  3^4 \times 144 c^2 \tau\le 3^4\times 192 c^2 \tau<M^4,
\]
we have that
\[
  |kR_k| 
       \leq \frac{c}{3^4 \tau} M^{k-3}|y|^{k^*}
           \sum_{p=2}^{k-5}\,\,\sum_{q=2}^{k-p-2}
          \frac{k^2}{p^2q^2(k-p-q)^2}.
\]
By applying  
Lemma \ref{lem:sum-2},
we have that
\[
  |kR_k| 
       \leq \frac{c}{3^4 \tau} M^{k-3}|y|^{k^*} \times 6\tau
   < 
         \frac{c}{3}M^{k-3}|y|^{k^*},
\]
which proves \eqref{eq:concl} for $k R_k$.
This completes the proof of Proposition \ref{prop:goal}.

\appendix
\section{%
 Inequalities used in the proof of Theorem~\ref{thm:main}
}\label{sec:eval}
For $a>0$, it holds that
 \begin{equation}\label{eq:partial}
    \frac{1}{u^2(a-u)^2} 
    = 
    \frac{1}{a^3}
    \left(
      \frac{a}{u^2}+\frac{2}{u} + \frac{a}{(a-u)^2}+\frac{2}{a-u}
    \right).
 \end{equation} 
Therefore,
 \begin{equation}\label{eq:int}
    \int_{t}^{a-t} \frac{du}{u^2(a-u)^2}
    =\frac{2}{a^3}\left(
               \frac{a(a-2t)}{t(a-t)}+2\log\frac{a-t}{t}
                  \right)
\qquad (0<t<\frac{a}{2}).
 \end{equation}
In particular, one can show that
there exists a positive constant $\tau$ such that
\begin{equation}\label{eq:tau2}
     t \int_{t}^{1-t}\frac{du}{u^2(1-u)^2}
 \leq \tau\qquad \left(0<t<\frac{1}{2}\right).
\end{equation}
The following assertion is needed to prove \eqref{eq:concl}
for $k P_k(y)$:

\begin{lemma}\label{lem:sum-int}
Let $p$ be a non-negative integer
and $k$ an integer satisfying
$k\geq p+4$.
Then the inequality
 \[
     \sum_{q=2}^{k-p-2}
       \frac{k^3}{q^2(k-p-q)^2}\leq 
     \int_{\frac{1}{k}}^{a-\frac{1}{k}}
            \frac{du}{u^2(a-u)^2}
     \qquad
     \left(a:=1-\frac{p}{k}\right)
 \]
holds.
\end{lemma}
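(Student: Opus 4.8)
The plan is to recognize the left-hand side as a Riemann sum for the right-hand integral and to dominate each summand by the integral of $g(u):=1/(u^2(a-u)^2)$ over an adjacent subinterval of width $1/k$, always choosing the subinterval lying on the side where $g$ is larger. First I would substitute $u=q/k$: since $q^2=k^2u^2$ and $k-p-q=k(a-u)$, one has $k^3/(q^2(k-p-q)^2)=\tfrac1k\,g(q/k)$, so the claim is equivalent to
\[
   \frac1k \sum_{q=2}^{k-p-2} g\!\left(\frac qk\right) \le \int_{1/k}^{a-1/k} g(u)\,du .
\]
Next I record the elementary fact that $\log g(u)=-2\log u-2\log(a-u)$ has derivative $(4u-2a)/(u(a-u))$, so $g$ is strictly decreasing on $(0,a/2)$, strictly increasing on $(a/2,a)$, and symmetric about its minimum at $u=a/2$.

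Then I would split the sum at the index $q_0:=\lfloor (k-p)/2\rfloor=\lfloor ka/2\rfloor$; since $k\ge p+4$ we have $q_0\ge 2$, so this index lies in the summation range. For $2\le q\le q_0$ the interval $[(q-1)/k,\,q/k]$ is contained in $(0,a/2]$, where $g$ is non-increasing, hence $\tfrac1k g(q/k)\le\int_{(q-1)/k}^{q/k}g(u)\,du$; these intervals are pairwise disjoint and their union is $[1/k,\,q_0/k]$. For $q_0+1\le q\le k-p-2$ the interval $[q/k,\,(q+1)/k]$ is contained in $(a/2,a)$ — here $q/k>a/2$ because $q_0+1>ka/2$, and $(q+1)/k\le(k-p-1)/k=a-1/k$ — where $g$ is non-decreasing, hence $\tfrac1k g(q/k)\le\int_{q/k}^{(q+1)/k}g(u)\,du$; these intervals are disjoint with union $[(q_0+1)/k,\,a-1/k]$. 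Adding the two groups and using $g\ge 0$ together with $[1/k,q_0/k]\cup[(q_0+1)/k,a-1/k]\subseteq[1/k,a-1/k]$ yields the desired inequality.

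The only delicate point is the bookkeeping near the endpoints of the summation range and the small cases $k-p\in\{4,5\}$, where $q_0=2$ and the second group of indices is empty (for $k-p=4$) or a single term (for $k-p=5$); in either situation the displayed bound still holds because discarding a sub-integral of the non-negative function $g$ only decreases the right-hand side. In short, the substance of the argument is the one-line monotonicity comparison $\tfrac1k g(q/k)\le\int g$ over the correctly chosen neighboring cell, and the main "obstacle" is merely checking that the chosen width-$1/k$ cells are disjoint, lie inside $[1/k,\,a-1/k]$, and each sit inside the monotonicity region used to bound the corresponding term.
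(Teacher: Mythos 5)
Your argument is correct. The substitution $u=q/k$, the unimodality of $g(u)=1/(u^2(a-u)^2)$ (decreasing on $(0,a/2)$, increasing on $(a/2,a)$, minimum at $u=a/2$), and the comparison of each term with the integral of $g$ over the adjacent width-$1/k$ cell on the side where $g$ dominates are all sound, and your bookkeeping at the split index $q_0=\lfloor ka/2\rfloor$ and in the small cases $k-p\in\{4,5\}$ checks out; discarding the untouched cell $[q_0/k,(q_0+1)/k]$ is harmless since $g\ge 0$. The paper proceeds differently: instead of splitting the sum at the midpoint, it uses the partial-fraction identity \eqref{eq:partial} to write $g$ as $\frac{1}{a^3}$ times the sum of a piece $\frac{a}{u^2}+\frac{2}{u}$ that is monotone decreasing on the whole interval and a piece $\frac{a}{(a-u)^2}+\frac{2}{a-u}$ that is monotone increasing there, and then bounds, for \emph{every} $q$ uniformly, the first piece by its integral over $[(q-1)/k,q/k]$ and the second by its integral over $[q/k,(q+1)/k]$; summing and enlarging both ranges to $[1/k,a-1/k]$ reassembles $g$ via the same identity. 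What the paper's route buys is that no case distinction at $u=a/2$ is needed and the identity \eqref{eq:partial} is reused to evaluate the integral explicitly in \eqref{eq:int}, which Lemma~\ref{lem:sum-2} relies on; what your route buys is a more direct and elementary argument that never needs the partial-fraction decomposition for the comparison step, at the cost of the index bookkeeping around $q_0$ that you carried out.
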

\begin{proof}
In fact, if we set $a:=1-({p}/{k})$,
then \eqref{eq:partial} yields that
\allowdisplaybreaks{
 \begin{alignat*}{2}
    \frac{k^3}{q^2(k-p-q)^2}&=
    \frac{1}{k}\frac{1}{\left(\frac{q}{k}\right)^2
                        \left(a-\frac{q}{k}\right)^2}\\
         &=\frac{1}{a^3}
                    \left[
                     \frac{1}{k}
                     \left(
                        \frac{a}{\left(\frac{q}{k}\right)^2}+
                        \frac{2}{\left(\frac{q}{k}\right)}
                     \right)+
                     \frac{1}{k}
                     \left(
                        \frac{a}{\left(a-\frac{q}{k}\right)^2}+
                        \frac{2}{a-\frac{q}{k}}
                     \right)
                    \right].
\intertext{
Since 
$x\mapsto
(a+2x)/{x^2}
$
is a monotone decreasing function
and the function
$x\mapsto
(a+2(a-x))/{(a-x)^2}
$
is monotone increasing
on the interval
$(0,{a}/{2})$,
we have that}
    \frac{k^3}{q^2(k-p-q)^2}&  
\leq
         \frac{1}{a^3}
                    \left[
                     \int_{\frac{q-1}{k}}^{\frac{q}{k}}
                     \left(
                        \frac{a}{u^2}+
                        \frac{2}{u}
                     \right)du+
                   \int_{\frac{q}{k}}^{\frac{q+1}{k}}
                       \left(
                        \frac{a}{(a-u)^2}+
                        \frac{2}{a-u}
                     \right)du
                    \right],\\
\intertext{%
which yields that
}
     \sum_{q=2}^{k-p-2}
       \frac{a^3k^3}{q^2(k-p-q)^2}
    &\leq 
         \int_{\frac{1}{k}}^{a-\frac{2}{k}}
                     \left(
                        \frac{a}{u^2}+
                        \frac{2}{u}
                     \right)du+
                   \int_{\frac{2}{k}}^{a-\frac{1}{k}}
                       \left(
                        \frac{a}{(a-u)^2}+
                        \frac{2}{a-u}
                     \right)du
\\
    &\leq 
         \int_{\frac{1}{k}}^{a-\frac{1}{k}}
                     \left(
                        \frac{a}{u^2}+
                        \frac{2}{u}
                     \right)du+
                   \int_{\frac{1}{k}}^{a-\frac{1}{k}}
                       \left(
                        \frac{a}{(a-u)^2}+
                        \frac{2}{a-u}
                     \right)du
\\
    &\leq 
         \int_{\frac{1}{k}}^{a-\frac{1}{k}}
                                   \left(
                        \frac{a}{u^2}+
                        \frac{2}{u}+
                        \frac{a}{(a-u)^2}+
                        \frac{2}{a-u}
                     \right)du \\
        &=\int_{\frac{1}{k}}^{a-\frac{1}{k}}\frac{du}{u^2(a-u)^2}.
 \end{alignat*}}
This proves the assertion.
\end{proof}

The following assertion is needed to prove \eqref{eq:concl}
for $k Q_k(y)$ and $k R_k(y)$:

\begin{lemma}\label{lem:sum-2}
For any integer $k \ge 7$, the
following inequalities hold{\rm:}
 \[
     \sum_{p=2}^{k-5}
     \sum_{q=2}^{k-p-2}
       \frac{k^2}{p^2q^2(k-p-q)^2}\leq \frac{6}{k}
       \int_{\frac{1}{k}}^{1-\frac{1}{k}}\frac{du}{u^2(1-u)^2}
       \leq 6\tau,
 \]
where $\tau$ is a constant satisfying \eqref{eq:tau2}.
\end{lemma}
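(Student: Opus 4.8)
The plan is to derive the first inequality by applying Lemma~\ref{lem:sum-int} twice; the second inequality is then immediate, since \eqref{eq:tau2} with $t=1/k$ (legitimate because $k\ge 7$) gives $\tfrac1k\int_{1/k}^{1-1/k}\frac{du}{u^2(1-u)^2}\le\tau$, hence $\tfrac6k\int_{1/k}^{1-1/k}\frac{du}{u^2(1-u)^2}\le 6\tau$.

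First I would enlarge the range of summation to $2\le p\le k-4$, $2\le q\le k-p-2$ (this only increases the left-hand side), so that $(p,q,k-p-q)$ runs over all triples of integers $\ge 2$ summing to $k$. Fixing $p$ and putting $s:=k-p$, I apply Lemma~\ref{lem:sum-int} with its parameters $(k,p)$ replaced by $(s,0)$, followed by the partial-fraction evaluation \eqref{eq:int}:
\[
\sum_{q=2}^{s-2}\frac{1}{q^2(s-q)^2}\le\frac{1}{s^3}\int_{1/s}^{1-1/s}\frac{du}{u^2(1-u)^2}=\frac{2}{s^3}\left(\frac{s(s-2)}{s-1}+2\log(s-1)\right)\le\frac{2}{s^2}+\frac{4\log s}{s^3},
\]
using $\frac{s-2}{s-1}<1$ and $\log(s-1)<\log s$ at the last step. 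Multiplying by $k^2/p^2$ and summing over $p$ then bounds the double sum by $2\sum_{p=2}^{k-4}\frac{k^2}{p^2(k-p)^2}+4\sum_{p=2}^{k-4}\frac{k^2\log(k-p)}{p^2(k-p)^3}$.

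For the first of these sums, Lemma~\ref{lem:sum-int} applied with parameters $(k,0)$ gives $\sum_{p=2}^{k-2}\frac{k^2}{p^2(k-p)^2}\le\frac1k\int_{1/k}^{1-1/k}\frac{du}{u^2(1-u)^2}$, so the first sum is at most $\frac2k\int_{1/k}^{1-1/k}\frac{du}{u^2(1-u)^2}$. It thus remains to bound the second sum by $\frac4k\int_{1/k}^{1-1/k}\frac{du}{u^2(1-u)^2}$; since \eqref{eq:int} shows $\int_{1/k}^{1-1/k}\frac{du}{u^2(1-u)^2}>2k$ for all $k\ge 7$ (this reduces to $2\log(k-1)>k/(k-1)$), it suffices to prove $4\sum_{p=2}^{k-4}\frac{k^2\log(k-p)}{p^2(k-p)^3}<8$. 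Here I would substitute $s=k-p$ and split the sum at $s=k/2$: for $s\le k/2$ one has $k^2/(k-s)^2\le 4$, so that portion is at most $16\sum_{s\ge 4}\frac{\log s}{s^3}$, a fixed convergent series below $\tfrac32$ (compare with $\int_4^\infty\frac{\log x}{x^3}\,dx$, noting $(\log x)/x^3$ decreases for $x\ge 2$); for $s>k/2$ one has $s^3>k^3/8$ and $\log s<\log k$, so that portion is at most $\frac{32\log k}{k}\sum_{p\ge 2}p^{-2}$, which is decreasing in $k$ for $k\ge 7$ and hence below its value at $k=7$, which is less than $6$. The two portions together are less than $8$, finishing the proof.

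The one genuinely delicate point is this last estimate. The naive bound $\log(k-p)\le\log k$ by itself only gives the second sum $\le\frac{\log k}{k}\int_{1/k}^{1-1/k}\frac{du}{u^2(1-u)^2}$, which beats $\frac4k\int_{1/k}^{1-1/k}\frac{du}{u^2(1-u)^2}$ only when $\log k<4$; to handle all $k\ge 7$ one must separate the tail where $p$ is close to $k$ (there $s=k-p$ stays bounded and the sum over $s$ converges by itself) from the bulk where $p$ is small (there $\log k/k$ supplies the decay). Everything else is a mechanical use of Lemma~\ref{lem:sum-int} and \eqref{eq:int}. One could instead avoid logarithms entirely: as $p+q+r=k$ forces $\max(p,q,r)\ge k/3$, one has $\frac{k^2}{p^2q^2r^2}\le 9\bigl(\tfrac{1}{p^2q^2}+\tfrac{1}{q^2r^2}+\tfrac{1}{r^2p^2}\bigr)$, so by symmetry the double sum is at most $27\bigl(\sum_{p\ge 2}p^{-2}\bigr)^2<12<\tfrac6k\int_{1/k}^{1-1/k}\frac{du}{u^2(1-u)^2}$ (the last inequality again because that integral exceeds $2k$); but that route bypasses the intermediate quantity $\tfrac6k\int$ highlighted in the statement.
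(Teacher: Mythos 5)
Your proof is correct, and it reaches the stated bound by a genuinely more laborious route than the paper's. The skeleton is shared: you bound the inner sum over $q$ by Lemma~\ref{lem:sum-int} (your parameters $(s,0)$ with $s=k-p$ are the paper's choice $a=1-p/k$ after rescaling), evaluate via \eqref{eq:int}, and handle the resulting main term $2\sum_p k^2/\bigl(p^2(k-p)^2\bigr)$ by a second application of Lemma~\ref{lem:sum-int}, with the final step $\tfrac6k\int_{1/k}^{1-1/k}\frac{du}{u^2(1-u)^2}\le 6\tau$ coming from \eqref{eq:tau2} exactly as in the paper. The difference is the logarithmic remainder. The paper disposes of it pointwise: since $\log(ka-1)/(ka)<1$, the whole inner contribution is at most $6/(p^2a^2)$ --- in your notation $\frac{2}{s^2}+\frac{4\log(s-1)}{s^3}\le\frac{2}{s^2}\bigl(1+\frac{2\log s}{s}\bigr)\le\frac{6}{s^2}$ --- so one second application of Lemma~\ref{lem:sum-int} yields $\tfrac6k\int$ immediately. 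You instead keep the $\log$ term and, because $\log(k-p)\le\log k$ is indeed too lossy, you need extra apparatus: the lower bound $\int_{1/k}^{1-1/k}\frac{du}{u^2(1-u)^2}>2k$ (correct, it reduces to $2\log(k-1)>k/(k-1)$), a split at $s=k/2$, and numerical estimates of $\sum_{s\ge4}\log s/s^3$ and $\sum_{p\ge2}p^{-2}$; I checked these (roughly $16\sum_{s\ge4}\log s/s^3<1.3$ and $\frac{32\log 7}{7}\sum_{p\ge2}p^{-2}\approx 5.74$), and the totals do come in under $8$, so your argument is valid. The ``delicate point'' you isolate simply dissolves if one compares $\log s$ with $s$ rather than with $k$, which is the paper's one-line trick. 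Your closing alternative --- using $\max(p,q,k-p-q)\ge k/3$ to bound the double sum by $27\bigl(\sum_{p\ge2}p^{-2}\bigr)^2<12$ and then invoking $\int>2k$ --- is also correct and is arguably the most elementary proof of the full chain, though, as you note, it reaches the intermediate quantity $\tfrac6k\int$ only through that same lower bound.
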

\begin{proof}
We set
$a = a(p):=1-({p}/{k})$.
Applying Lemma \ref{lem:sum-int}
and the identity \eqref{eq:int},
we have that
 \allowdisplaybreaks{%
 \begin{alignat*}{2}
     \sum_{p=2}^{k-5}&
     \sum_{q=2}^{k-p-2}
       \frac{k^2}{p^2q^2(k-p-q)^2}
     =\sum_{p=2}^{k-5}\left[
       \frac{1}{k p^2}
     \sum_{q=2}^{k-p-2}
       \frac{k^3}{q^2(k-p-q)^2}\right]\\
     &\leq\sum_{p=2}^{k-5}
       \left[
       \frac{1}{k p^2}
       \int_{\frac{1}{k}}^{a-\frac{1}{k}}
          \frac{du}{u^2(a-u)^2}
       \right] 
=     \sum_{p=2}^{k-5}
       \left[
       \frac{1}{p^2}
       \frac{2}{a^2}\left(
          \frac{a-\frac{2}{k}}{a-\frac{1}{k}}
          + 2 \frac{\log(ka-1)}{ka}
       \right)
       \right]\\
     &\leq
     \sum_{p=2}^{k-5}
       \left[
       \frac{2}{p^2 a^2}
       \left( 1 + 2\frac{\log ka}{ka}
       \right)
       \right]
     \leq
     \sum_{p=2}^{k-5}
       \frac{6}{p^2 a^2},
\end{alignat*}}%
where we used the fact that
$\frac{\log ka}{ka}<1$.
By applying Lemma \ref{lem:sum-int}
and by using the property \eqref{eq:tau2}
of the constant $\tau$,
it holds that
 \allowdisplaybreaks{%
 \begin{alignat*}{2}
     \sum_{p=2}^{k-5}
     \sum_{q=2}^{k-p-2}
       \frac{k^2}{p^2q^2(k-p-q)^2}
     &\le 6\sum_{p=2}^{k-5}
       \frac{1}{p^2 \left(1-\frac{p}{k}\right)^2}
      =\frac{6}{k}
       \sum_{p=2}^{k-5}
       \frac{k^3}{p^2 \left(k-p\right)^2}\\
     &\leq\frac{6}{k}
       \sum_{p=2}^{k-2}
       \frac{k^3}{p^2 \left(k-p\right)^2}
     \leq\frac{6}{k}
        \int_{\frac{1}{k}}^{1-\frac{1}{k}}
            \frac{du}{u^2(1-u)^2}<6\tau,
 \end{alignat*}
which proves the assertion.
}
\end{proof}

\end{document}